\theoremstyle{plain}
\newtheorem{Theorem}{Theorem}   [section]
\newtheorem{Corollary}[Theorem] {Corollary}
\newtheorem{Definition}[Theorem] {Definition}
\newtheorem{Proposition}[Theorem] {Proposition}
\newtheorem{Ruleset}[Theorem] {Ruleset}
\begin{document}
\title{An investigation into the application of genetic programming to combinatorial game theory}
\maketitle
\begin{center}
  {\sc Melissa A. Huggan}\textsuperscript{1}\footnote[2]{Supported by the Natural Sciences and Engineering Research Council of Canada (funding reference number PDF-532564-2019).},
  {\sc Craig Tennenhouse}\textsuperscript{2}
\end{center}
  \par \bigskip
  \textsuperscript{1}Ryerson University, Toronto, ON, Canada, \href{mailto:Melissa.Huggan@ryerson.ca}{melissa.huggan@ryerson.ca} \par
  \textsuperscript{2}University of New England, Biddeford, ME 04005, USA, \href{mailto:ctennenhouse@une.edu}{ctennenhouse@une.edu} \par

\date{}

\maketitle

\begin{abstract} 
\noindent
Genetic programming is the practice of evolving formulas using crossover and mutation of genes representing functional operations. Motivated by genetic evolution we develop and solve two combinatorial games, and we demonstrate some advantages and pitfalls of using genetic programming to investigate Grundy values. We conclude by investigating a combinatorial game whose ruleset and starting positions are inspired by genetic structures. 
\end{abstract}

\noindent
{\sc Keywords}: Combinatorial Game Theory, Genetic Algorithms, Genetic Programming

\section{Introduction}\label{sec:intro}
The fundamental unit of biological evolution is a gene, which represents a small piece of information, and the genome is a collection of genes that encodes an organism's complete genetic information. Within the context of biological evolution, the genes of the most fit organisms survive and are passed onto the next generation, with their chromosomes modifying over time to better fit their environment through competition. This modification occurs through the processes of mutation and crossover, wherein individual genes are altered and pairs of chromosomes trade information, respectively, as organisms pass down their genetic information to their progeny (see Figure~\ref{fig:cross-mut}).

\begin{figure}[h]
\centering
\includegraphics[width=.6\textwidth]{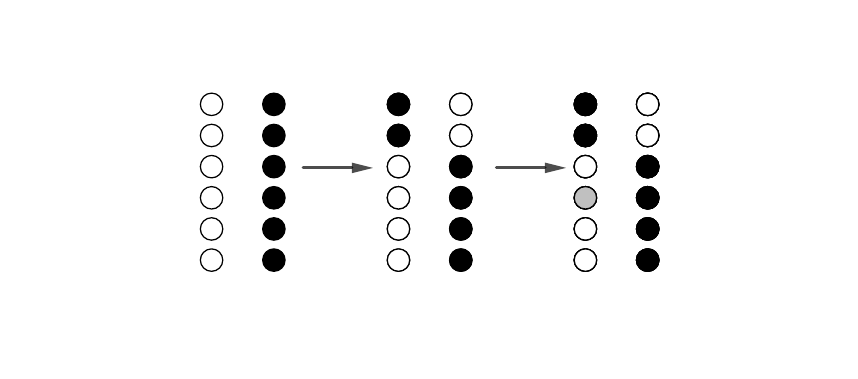}
\caption{A pair of chromosomes undergoing crossover then mutation.}
\label{fig:cross-mut}
\end{figure}

This set of mechanisms in biological evolution has been co-opted as a model for algorithmic development of heuristic solutions to a variety of problems, like antenna design \cite{hornby2006automated}, the Traveling Salesman Problem \cite{brady1985optimization}, and graph coloring \cite{galinier1999hybrid}. In these problems a chromosome encodes information about the structure and properties of working solutions. These solutions are the results of \emph{genetic algorithms}. When the chromosome instead represents a function or program the process is called \emph{genetic programming}. Genetic programming is often used when a user has a collection of data points and is looking for a function to fit them. The fitness of a particular program is therefore related to the error between the data points and the program. This mechanism is similar to that of regression in statistical methods (Figure~\ref{fig:regression}).

\begin{figure}[h]
\centering
\includegraphics[width=.3\textwidth]{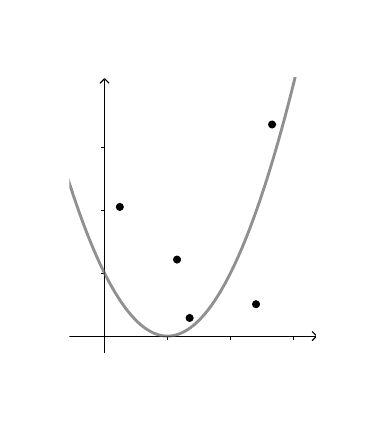}
\includegraphics[width=.3\textwidth]{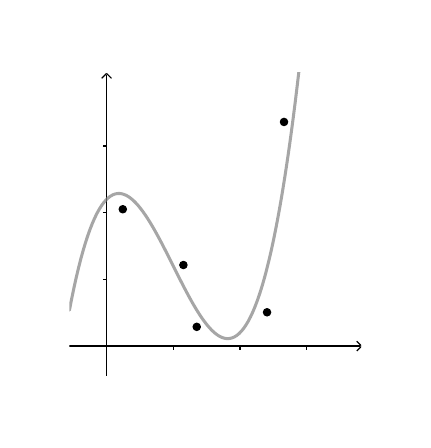}
\caption{A curve with poor fit to data points (left) and another with much better fit (right).}
\label{fig:regression}
\end{figure}

There are a number of different structures used in genetic programming to represent a chromosome, the simplest being a linear structure and a tree structure. A linearly organized chromosome can be visualized much like a biological chromosome (see Figure~\ref{fig:cross-mut}). One example of such a structure is in \cite{oltean2004evolving}, which introduces Multi-Expression Programming. Chromosomes organized into trees have some advantages over the linear approach, and we will discuss them further in Section~\ref{sec:methods}. As chromosomes undergo crossover and mutation, the error tends to decrease and the genetic program evolves to progressively better fit the goal data. 

Genetic algorithms have been applied to combinatorial games, (see~\cite{HauptmanS2005, SAHS2007}). However, these efforts have been focused on using genetic programming to develop strategies rather than finding a formula for the Grundy values. We are interested in examining whether genetic programming could be a useful tool for determining values of combinatorial game positions. The only model for this type of project in the literature is in \cite{oltean2004evolving}, which uses the Multi-Expression Programming model with a linear chromosome. This method does not precisely fit our needs, as the author of that paper focuses on the outcome class classification problem instead of Grundy values, and restricts their investigation to \textsc{nim}. However, the project and its success serve as a strong motivator for the application of genetic programming to combinatorial games, and we hope we have done it justice in extending their results and adding to the body of work combining these two mathematical endeavors. 

Recall that the \emph{Grundy value} of an impartial game position is the smallest non-negative integer not included in the Grundy values of its options \cite{sprague1935mathematische,grundy1939mathematics}. For more information about combinatorial game theory, see~\cite{Albert, Berlekamp, conway2000numbers}. In this project we generate data points of the form $(H,g)$, where $H\in \mathbb{Z}^n$ is a list of integers representing a game position, and $g\in \mathbb{Z}$ is the associated Grundy value. Of the myriad difficulties to overcome in the project, not least of which is the fact that heuristics are not often useful for calculating Grundy values. In truth, either a function completely determines the value of a game or it is incorrect. This leaves us with the difficult task of devising a fitness function that represents distance, not a natural concept in the space of impartial game values, and at the same time leads to eventual convergence with an error of zero.

\subsection{Genetic Programming: Methods}\label{sec:methods}

Since we are interested in data points that are computationally inexpensive to determine, we have chosen to use the Python package \texttt{gpLearn} \cite{stephens2015gplearn}. This package uses the tree model of chromosome representation, introduced in the Introduction to Section~\ref{sec:intro}, wherein each leaf is associated with a primitive (a constant or a single input parameter), and each internal node a function on its child node(s). The root node is therefore recursively associated with a single function on the set of primitives (see Figure~\ref{fig:gp-tree}).

\begin{figure}[h]
\centering
\includegraphics[width=.4\textwidth]{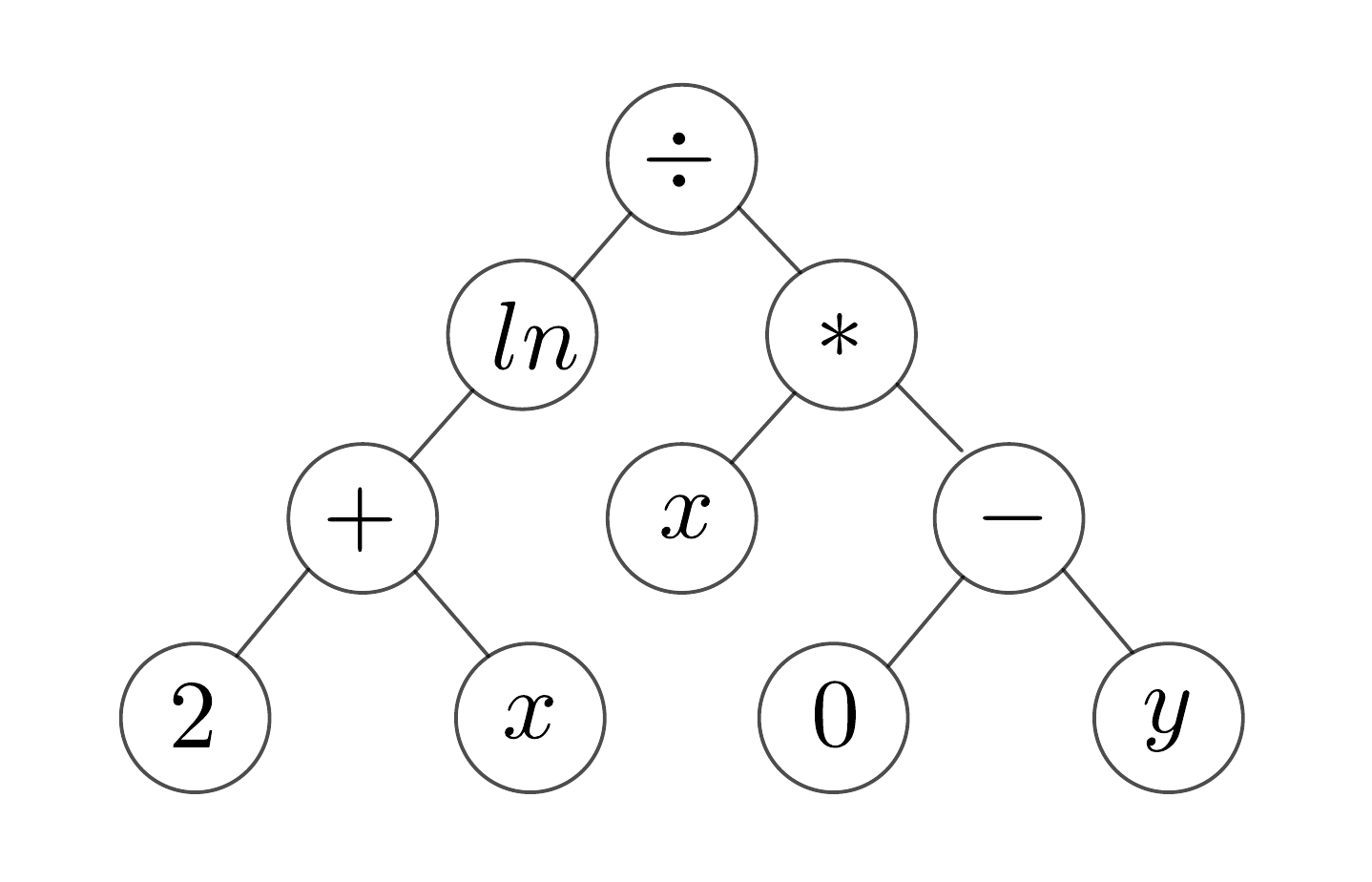}
\caption{A chromosome in tree format representing the function $f(x,y) = \frac{\ln(2+x)}{x(0-y)}$.}
\label{fig:gp-tree}
\end{figure}
	
Mutation is represented by pseudo-randomly replacing a node with a different function or primitive (or one of another set of mutation-like actions), as appropriate. Crossover between chromosomes is enacted by swapping sub-trees. 

For the games in Sections \ref{sec:single} and \ref{sec:twopoint} we examined a number of different sets of hyper-parameters, and the most reasonable for both convergence and computation time were heap sizes up to $10$ on positions with anywhere from one to five heaps.

The package \texttt{gpLearn} is intended for fitting real-valued functions of several real variables to data points using standard elementary functions and binary operations over the reals. We modified the default set of functions to instead focus on discrete functions of several discrete variables. We wrote and included the following binary and unary operations, which operate bitwise on integer inputs: \texttt{XOR}, \texttt{AND}, \texttt{OR}, \texttt{NOT}. We also included \texttt{MOD}, \texttt{LOG$_2$}, and \texttt{PLUS1}, whose operations are self-explanatory. Finally, we introduced logical operators \texttt{EQUAL}, \texttt{LESS}, and \texttt{GREATER} to return $0$ for False and $1$ for True. Default functions included \texttt{SUB} for subtraction, \texttt{ADD}, \texttt{TIMES}, and \texttt{DIVIDE}.  Our fitness function computed the total absolute difference between each genetic program and the computed Grundy values, so that a lower fitness value represents a better fit, although we experimented with measuring distance using the nim-sum. 

Populations ranged from $1{,}000$ to $10{,}000$ individual programs, and we restricted most runs to $20$ generations. \emph{Elites}, relatively highly fit programs in each generation, were retained unmodified between generations. We also experimented with rates of mutation, settling on higher values to prevent getting stuck in local minima. 

\section{A single-point crossover and mutation game}\label{sec:single}
There are two primary methods of crossover used in genetic algorithms, \emph{one-point} and \emph{two-point}. For the former, consider a pair of bit strings of length $n$, $B_1 = (a_1,\ldots ,a_n)$ and $B_2  =(b_1,\ldots ,b_n)$. An integer $k \in [1,n)$ is chosen pseudo-randomly, and the sub-strings $(a_1,\ldots ,a_k)$ and $(b_1,\ldots ,b_k)$ are swapped, leading to the new bit strings $$B_1' = (b_1,\ldots ,b_{k},a_{k+1},\ldots ,a_n), B_2' = (a_1,\ldots ,a_{k},b_{k+1},\ldots ,b_n).$$ After crossover there is a possible mutation, depending on the chosen mutation rate, turning, say, $B_1'$ into $B_1'' = (b_1,\ldots ,b_{i-1},1-b_i,b_{i+1},\ldots ,b_{k},a_{k+1},\ldots ,a_n)$.

Motivated by these processes we define a new impartial combinatorial game, \textsc{ga1}. In order to simplify both rules and analysis we define a position as a single bit string. A mutation move flips a single bit in the string, and while there is no real crossover in a single string we consider the flip of a sequence of bits to be representative of this operation.

\begin{Ruleset}[\textsc{ga1}]
A position in \textsc{ga1} is a bit string of length $n$. There are two move options. Crossover consists of choosing an integer $k$, $1\leq k\leq (n-1)$, wherein all bits from position $1$ through $k$ are flipped. A mutation move is simply the flip of any single bit in the string. A move is legal only if the total number of sub-strings of the form $01$ and $10$ increases.
\end{Ruleset}

This latter restriction, that `disorder' increases, serves two purposes. Firstly, it ensures that the game ends in a finite number of moves. Secondly, it represents the tendency of chromosomes to combine in ever more complex ways over time. We define the condition of increasing sub-strings $01$ and $10$ formally as follows. 

\begin{Definition}
The \emph{entropy} of a bit string game is the number of sub-strings of the form $01$ and $10$. 
\end{Definition}

The game \textsc{ga1} is equivalent to a heap game in the following way. If we consider a \emph{run} in a bit string to be a maximal sub-string consisting of all $0$s or all $1$s, then any bit string can be converted into a list of integers representing run sizes. For example, the string $001000111$ becomes $(2,1,3,3)$. Although this representation loses information about which bits are associated with each integer, the symmetry of the ruleset makes this lost information unnecessary to the game analysis. We can simplify the ruleset further by Proposition~\ref{prop:ga1simplify}.

\begin{Proposition}\label{prop:ga1simplify}
If $H = (h_1,\ldots ,h_k)$ is a list of heaps representing a bit string in \textsc{ga1}, then
\begin{enumerate}
	\item Any heap equal to $1$ can be removed
	\item The order of the heaps does not affect the Grundy value
	\item Each move is equivalent to one of the following
	\begin{enumerate}
		\item Split any heap $h_i > 3$ into two heaps of size at least $2$ each
		\item Remove $1$ from any heap $h_i\geq 3$
		\item Remove $1$ from any heap $h_i\geq 5$ and split the remainder into two heaps of at least $2$ each
		\item Remove any heap of size $2$ or $3$
	\end{enumerate}
\end{enumerate}
\end{Proposition}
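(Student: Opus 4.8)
The plan is to first reduce \textsc{ga1} to a disjunctive sum of independent single-run games and then classify the legal moves on a single run. I would begin by recording the identity that, for a bit string with $r$ maximal runs, the entropy (the number of $01$ and $10$ substrings) equals $r-1$, since each such substring sits at a boundary between two consecutive runs and there are exactly $r-1$ such boundaries. Hence a move is legal precisely when it strictly increases the number of runs. The central structural claim is that every legal move alters the run-length list $H=(h_1,\ldots ,h_k)$ in a single coordinate, replacing one entry $h_i$ by a short block of entries and leaving every other entry untouched. Granting this, \textsc{ga1} is the disjunctive sum of the games played on the individual runs, so its Grundy value is the nim-sum of the per-run values. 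A run of length $1$ admits no legal move (flipping its unique bit only merges it with a neighbour, decreasing $r$, and no prefix flip disturbs a run it does not split), so such a run has Grundy value $0$; this gives part 1, and commutativity of the nim-sum gives part 2.

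For part 3 I would analyse the two move types separately at the level of run lengths. For crossover, the key observation is that flipping the prefix $1,\ldots ,k$ leaves the equality or inequality of each consecutive pair $b_j,b_{j+1}$ unchanged except at the single junction $(k,k+1)$: every boundary strictly inside the prefix and strictly inside the suffix is preserved, while only the pair straddling the cut is toggled. A boundary is created there (so $r$ increases and the move is legal) exactly when $k$ and $k+1$ lie in the same run, in which case that run $h_i$ is split into two runs of sizes $s$ and $h_i-s$ for an arbitrary $1\le s\le h_i-1$, all other run lengths unchanged. After discarding size-$1$ parts this yields a split of $h_i\ge 4$ into two heaps each $\ge 2$ (giving (a)) and, when $s\in\{1,h_i-1\}$, the decrement $h_i\mapsto h_i-1$ (giving (b), with the $h_i=2$ instance falling under (d)).

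For mutation, flipping a single bit $b_j$ changes only the two adjacencies $(j-1,j)$ and $(j,j+1)$, so it too acts on one run, and legality is pinned down by where $b_j$ sits in that run. Flipping a bit strictly interior to a run of length $\ge 3$ raises $r$ by $2$ and replaces the run $h_i$ by the block $(a,1,b)$ with $a,b\ge 1$ and $a+b=h_i-1$; flipping the extreme bit of a run that abuts an end of the string raises $r$ by $1$ and peels off a size-$1$ piece; and flipping a bit adjacent to an internal run boundary is absorbed into a neighbour and does not increase $r$, hence is illegal. The genuinely new contribution is the interior flip: after deleting the central size-$1$ run it leaves two heaps summing to $h_i-1$, which produces (c) when both are $\ge 2$ (forcing $h_i\ge 5$), and which reduces to the single decrement $h_i\mapsto h_i-2$ when one of the two pieces is itself a single bit.

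The main obstacle, and the step I would spend the most care on, is exactly this final reduction: proving that the list (a)--(d) is both complete and free of spurious moves once size-$1$ heaps are removed. Two points are delicate. First, the end-versus-interior position of a run changes which single-bit flips are legal, so one must verify that every ``remove one bit'' outcome is already supplied by crossover, which makes the end-flip mutations redundant. Second, an interior flip can create two size-$1$ runs simultaneously and so yield the outcome $h_i\mapsto h_i-2$; one must confirm that each such reduced position still lies under one of (a)--(d), in particular checking the boundary case $h_i=3$ (where it is the removal of a size-$3$ heap, covered by (d)). I would therefore organise the argument as an exhaustive case analysis on $(h_i,s)$ and on the location of the flipped bit, matching each reduced option to one of the four stated move types, and I would cross-check completeness by confirming that the resulting $\mathrm{mex}$ recursion reproduces the directly computed Grundy values of the small heaps; this consolidation is where the real content of the proposition lies.
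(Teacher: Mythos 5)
Your overall strategy is the same as the paper's: convert the bit string to a list of run lengths, observe that the entropy equals the number of run boundaries so that legality is exactly ``the number of runs increases,'' note that every legal move changes the run-length list in only one coordinate (so the game decomposes as a disjunctive sum, heaps of size $1$ are dead, and order is irrelevant), and then classify crossover and mutation moves by where the cut or flipped bit sits inside a run. You carry this out in considerably more detail than the paper does, and your derivation of the raw move set is correct.

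The one place you stop short is exactly the place you flag as delicate, and it does not resolve the way you hope. A mutation at a bit that is interior to a run of length $h_i\geq 4$ but adjacent to its extreme bit produces the block $(1,1,h_i-2)$, i.e., after deleting size-$1$ runs, the single-move option $h_i\mapsto h_i-2$. You promise to ``confirm that each such reduced position still lies under one of (a)--(d),'' but for $h_i\geq 4$ it does not: (a) and (c) produce two heaps, (b) produces $h_i-1$, and (d) applies only to heaps of size $2$ or $3$. This move is genuinely present and genuinely needed: the option $(2)$ from a heap of size $4$ is what makes $\mathcal{G}(4)=\operatorname{mex}\{0,1,2\}=3$, matching the \textsc{kayles} value of a $3$-heap, and the paper itself lists $(n-2)$ among the options in the proof of Theorem~\ref{thm:ga1-kayles}. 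So either the proposition's list must be read as also containing ``remove $2$ from any heap $h_i\geq 4$'' (which is how the paper implicitly treats it, its own proof merely saying that mutation splits a heap into two or three parts with at least one part of size $1$), or your case analysis, carried out honestly, shows the stated list is incomplete. Your proposed cross-check against computed Grundy values would catch this, but as written your proof defers the only step that actually fails.
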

\begin{proof}
We will prove each part of Proposition~\ref{prop:ga1simplify} separately.
\begin{enumerate}
	\item A single bit between two runs of the opposite value or at the end of a string is represented by a heap of size $1$. No move that increases entropy has an effect on this heap, and thus its removal has no effect on game play nor the Grundy value of the position. Thus it can be removed.
	
	\item Say that heaps $h_i$ and $h_j$ switch positions in $H$ resulting in $H'$. Any mutation or crossover point $k$ chosen within $h_i$ in $H$ is equivalent to an index $k'$ in $H'$, which results in identical game play. Therefore the order of the heaps in $H$ does not affect the game value. 
	\item For the move equivalences, note that in order to make a legal crossover move in \textsc{ga1} a player must choose the crossover point in the midst of a run. This effectively splits the run into two, and leaves the others alone (other than switching the bits in the affected sub-string). This is equivalent to splitting a heap into two and if one or both of the resulting heaps have size $1$ then they can be removed from play. Similarly, a legal mutation move must also occur in the midst of a run, splitting a heap into either two or three with at least one heap of size $1$. Again, these size $1$ heaps can be removed. 
\end{enumerate}
\end{proof}

As a direct result of Proposition~\ref{prop:ga1simplify} we need only consider single heap positions, since the Grundy value of a list of heaps is equal to the nim-sum of the Grundy values of the individual heaps.

The package \texttt{gpLearn} was employed as described in Section~\ref{sec:methods}. While no exact formula was found, after $14$ generations a local minimum was reached. Modifying hyper-parameters and running for another $7$ generations led to the formula 
\begin{center}
\texttt{MOD(1+h,MOD(h+1,3) + 1) - MOD(h-1,4) + MOD(1+h,3) + 4}
\end{center}

where $h$ is the size of a single heap and \texttt{MOD(x,n)} represents $x \pmod n$. While not a particularly accurate formula we do see the presence of both \emph{modulo 3} and \emph{modulo 4}. Hence, we examine the actual Grundy values closely for periodicity of order twelve and find a striking similarity with the values of the combinatorial game \textsc{kayles}.

\begin{Ruleset}[\textsc{kayles}]\label{rs:kayles}{\rm \cite{dudeneycanterbury}}
In \textsc{kayles} a player may remove one or two stones from any heap, and if any stones remain these may be split into two heaps. 
\end{Ruleset}

\textsc{kayles} has octal code $0.77$ \cite{conway2000numbers} and has been well-studied. In particular, it is known that the Grundy values for a single heap game of \textsc{kayles} of size $n$ is periodic with period $12$ after $n=71$ \cite{guy1956g}. 

\begin{Theorem}\label{thm:ga1-kayles}
The Grundy value of a single heap game of size $n$ in \textsc{ga1} is equal to the value of a heap of size $(n-1)$ in \textsc{kayles}.
\end{Theorem}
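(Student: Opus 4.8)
The plan is to set up a Grundy-value-preserving correspondence between single-heap \textsc{ga1} positions and single-heap \textsc{kayles} positions via the shift $h \mapsto h-1$, and to verify it by strong induction. Write $g(n)$ for the Grundy value of a single \textsc{ga1} heap of size $n$ and $k(m)$ for that of a single \textsc{kayles} heap of size $m$. By the remark following Proposition~\ref{prop:ga1simplify}, the value of any multi-heap position in either game is the nim-sum of its single-heap values, so it suffices to prove $g(n) = k(n-1)$ for all $n \ge 1$; I would induct on $n$.

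The first real step is to pin down, directly from the run/entropy description, the exact set of options of a single \textsc{ga1} heap of size $n$. Following a legal crossover or mutation applied to the run $0^n$ and then discarding the length-$1$ runs it creates (Proposition~\ref{prop:ga1simplify}(1)), the reachable reduced positions are exactly: the single heap $n-1$; the single heap $n-2$; every pair $\{a,b\}$ with $a+b=n$ and $a,b\ge 2$; and every pair $\{a,b\}$ with $a+b=n-1$ and $a,b\ge 2$. The option $n \to n-2$, obtained by flipping the bit one place in from the end of the run, is the subtle one and is exactly what forces agreement with \textsc{kayles}, so I would take care to justify it.

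For the inductive step (with $n \ge 4$) I would list the options of the \textsc{kayles} heap $m = n-1$: remove one ($\to m-1$), remove two ($\to m-2$), remove one and split into $\{c,d\}$ with $c+d = m-1$, and remove two and split into $\{c,d\}$ with $c+d = m-2$. Under $h \mapsto h-1$ the two single-heap \textsc{ga1} options map to the two no-split \textsc{kayles} options, while $(a,b) \mapsto (a-1,b-1)$ carries the two \textsc{ga1} split families bijectively onto the two \textsc{kayles} split families, the constraint $a,b \ge 2$ matching $c,d \ge 1$ precisely. Invoking the induction hypothesis on each single heap and nim-additivity on each two-heap option shows that the set of Grundy values reachable from \textsc{ga1} heap $n$ coincides with that reachable from \textsc{kayles} heap $n-1$; taking mex yields $g(n) = k(n-1)$. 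The base cases $n \in \{1,2,3\}$ I would check directly, obtaining $g(1),g(2),g(3) = 0,1,2 = k(0),k(1),k(2)$.

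The main obstacle is getting the \textsc{ga1} option set exactly right: overlooking the $n \to n-2$ move collapses $g(4)$ to $1$ and breaks the correspondence, so the heart of the argument is the honest enumeration of moves from the run picture. A secondary point requiring attention is that for very small heaps the move correspondence degenerates — several \textsc{ga1} moves can lead to the same (often empty) position, and certain splits or the remove-two move are unavailable on one side — but all such degeneracies occur for $n \le 3$ and are absorbed into the base cases, leaving a clean bijection of option-value sets for every $n \ge 4$.
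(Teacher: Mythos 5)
Your proposal is correct and follows essentially the same route as the paper: both enumerate the options of a \textsc{ga1} heap of size $n$ and of a \textsc{kayles} heap of size $n-1$, and match them via the shift $h \mapsto h-1$ applied to every resulting heap (the paper phrases this as ``remove a stone, make a \textsc{kayles} move, add a stone back to each resulting heap''). You merely make explicit the strong induction, nim-additivity, and base cases that the paper leaves implicit.
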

\begin{proof}
This is easy to compute for $n\leq 3$. If $n\geq 4$ then the options are $\{(j,n-j) : 2\leq j\leq (n-j)\} \cup \{(k,n-k-1):2\leq k\leq n-k-1\} \cup \{(n-1), (n-2)\}$. The options for an $(n-1)$-sized heap in \textsc{kayles} are $\{(j,n-j-2) : 1\leq j\leq (n-j-2)\} \cup \{(k,n-k-3):1\leq k\leq n-k-3\} \cup \{(n-2),(n-3)\}$. We can therefore consider a move in \textsc{ga1} to be equivalent to the following process:
\begin{enumerate}
	\item Remove a stone from a heap,
	\item Make a \textsc{kayles} move in the resulting heap of size $(n-1)$,
	\item Add a stone back to all resulting heaps.
\end{enumerate}

Therefore the game \textsc{ga1} reduces to a game of \textsc{kayles}, and thus the Grundy values are computable in the same manner as those for \textsc{kayles}.
\end{proof}

\section{A two-point crossover game}\label{sec:twopoint}
Next we consider a similar impartial game based on genetic crossover, this time using two positions instead of one. Consider a pair of bit strings
$$B_1 = (a_1,\ldots ,a_n)$$ and $$B_2  =(b_1,\ldots ,b_n).$$ If $1\leq x < y\leq n$ are integers then \emph{two-point crossover} using positions $x$ and $y$ results in the bit strings $$B_1' = (a_1,\ldots ,a_{x-1},b_x,\ldots ,b_{y-1},a_y,\ldots ,a_n)$$ and  $$B_2' = (b_1,\ldots ,b_{x-1},a_x,\ldots ,a_{y-1},b_y,\ldots ,b_n).$$ That is, a sub-string with matching indices from each bit string is swapped. We wish to define an impartial game motivated by two-point crossover as a move mechanic. As we did with \textsc{ga1}, we play only in a single bit string.
This also means that defining mutation-type moves is redundant since any such move would be equivalent to crossover with $x = y-1$. 

\begin{Ruleset}[\textsc{ga2}]\label{rs:ga2}
A position in \textsc{ga2} is a bit string of length $n$. On their turn a player chooses two integers $x,y\in [1,n], x<y$, wherein all bits from position $x$ through $(y-1)$ are flipped. A move is legal only if the total number of sub-strings of the form $01$ and $10$ increases.
\end{Ruleset}

As with \textsc{ga1} we can reduce \textsc{ga2} to a game on heaps. Note that, again, a run of bits can be represented by an integer. A legal move requires that at least one of $\{x,y\}$ is chosen within a run. The possible options are
\begin{enumerate}
	\item Both $x$ and $y$ are within the bounds of a single run, equivalent to splitting a single heap into three heaps,
	\item $x$ and $y$ are each within the bounds of different runs, equivalent to splitting any two heaps into two each,
	\item $x$ and $y$ are chosen so that exactly one single heap is split into two.
\end{enumerate}

Just as with Proposition~\ref{prop:ga1simplify} we see that heaps of size $1$ are negligible, as is the order of the heaps. However, since players can alter multiple heaps in a single move we cannot compute the Grundy value by simply computing the nim-sum of the Grundy values of single heap games. 

As in Section~\ref{sec:single} we applied \texttt{gpLearn} with the modified function list to computationally determined Grundy values, without first examining these values. Once the number of non-zero heaps was included as a primitive value in games with more than two heaps (e.g. $(3,h_1,h_2,h_3)$ is a three-heap game, while $(4,h_1,h_2,h_3,h_4)$ represents a four-heap position), genetic programming proved much more successful, yielding the formulas below with $100\%$ accuracy:
\begin{enumerate}
	\item For a single-heap game with heap size $h$, 
	\\
	\texttt{MOD(SUB(h,1),PLUS1(PLUS1(1)))}
	\\
	which is equivalent to $(h-1) \pmod{3}$.
	
	\item With two heaps $h_1,h_2$
	\\
	\texttt{MOD(PLUS1(SUB(ADD($h_1$, $h_2$), XOR($h_1$, $h_1$))), \\PLUS1(PLUS1(EQUAL($h_1$, $h_1$))))}
	\\
	which is equivalent to $(h_1+h_2 + 1) \pmod{3}$.
	
	\item For a three-heap game with inputs $3,h_1,h_2,h_3$ we found 
	\\
	\texttt{MOD(ADD(ADD($h_3$, $h_1$), $h_2$), ADD(3, SUB(0, 0)))}
	\\
	which reduces to $(h_1+h_2+h_3) \pmod{3}$.
\end{enumerate}

While these results themselves do not provide a generalized formula, they do generalize easily to the following. 

\begin{Theorem}\label{thm:ga2}
Let $H = (h_1,\ldots ,h_n)$ be an $n$-heap position in \textsc{ga2}, and let $t$ be the smallest non-negative integer such that $(n+t)\equiv 0\pmod{3}$. Then the Grundy value of $H$ is $\left(t + \sum \limits_{i=1}^n h_i\right) \pmod{3}$.
\end{Theorem}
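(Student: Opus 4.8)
The first thing I would do is strip out the auxiliary quantity $t$. Since $t$ is defined by $t \equiv -n \pmod 3$, we have $t + \sum_i h_i \equiv \sum_i h_i - n = \sum_i (h_i - 1) \pmod 3$, so the theorem is equivalent to the assertion that the Grundy value of $H$ equals $\Phi(H) \bmod 3$, where I set $\Phi(H) := \sum_i (h_i - 1)$. This reformulation is convenient for two reasons: it is manifestly unchanged by inserting or deleting heaps of size $1$ (each contributes $0$), which is exactly the simplification noted for \textsc{ga2} just before the theorem; and $\Phi$ is a non-negative integer that, as I will check, strictly decreases under every legal move. The plan is therefore to induct on $\Phi(H)$ and, at each step, to apply the mex characterisation of the Grundy value together with the induction hypothesis $\mathcal{G}(H') = \Phi(H') \bmod 3$ for every option $H'$.

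The computational heart of the argument is to record, for each of the three move types catalogued before the theorem, the change $\Delta\Phi$ it induces. Splitting one heap into three (type~1) and splitting two heaps into two apiece (type~2) each create two new parts, so both give $\Delta\Phi = -2$; splitting one heap into two (type~3) gives $\Delta\Phi = -1$. I would verify these by direct substitution, noting that they are unaffected by whether the freshly created size-$1$ heaps are deleted. Reducing modulo $3$, a type~1 or type~2 move shifts the residue $\Phi \bmod 3$ by $+1$, while a type~3 move shifts it by $+2$; in particular no legal move preserves the residue. This yields one half of the mex computation at once: writing $v := \Phi(H) \bmod 3$, no option $H'$ has $\mathcal{G}(H') = v$, so $v$ is absent from the set of option values.

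For the other half I must show that every residue strictly below $v$ is realised by some option, and here the real work is confirming that the required move type is actually \emph{legal} from $H$. First, $\Phi(H) = 0$ forces every heap to have size $1$, so the position is terminal with Grundy value $0 = v$, which is the base case. When $v = 1$ I need an option of residue $0$, which a type~3 move provides as soon as some heap has size $\geq 2$; this is guaranteed since $\Phi(H) \geq 1$. When $v = 2$ I need options of residues $0$ and $1$: residue $1$ again comes from a type~3 move, and residue $0$ from a type~1 or type~2 move. The latter is the step I expect to be the main obstacle, since it demands that a $\Delta\Phi = -2$ move be available, i.e.\ that $H$ contain either a heap of size $\geq 3$ (enabling type~1) or at least two heaps of size $\geq 2$ (enabling type~2). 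I would dispatch this by observing that $\Phi(H) \geq 2$ leaves exactly these possibilities: a single heap contributing $\geq 2$ to $\Phi$ must have size $\geq 3$, and otherwise the two units of $\Phi$ come from two distinct heaps, each of size $\geq 2$. Combining the two halves, the set of option residues is precisely $\{0,\ldots,v-1\}$ with $v$ omitted, so its mex is $v$, completing the induction. The only points demanding genuine care are the entropy-increase bookkeeping that underwrites these move-availability claims and the consistent treatment of size-$1$ heaps throughout.
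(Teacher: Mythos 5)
Your proposal is correct and is essentially the paper's argument in a cleaner package: your induction on $\Phi(H)=\sum_i(h_i-1)$ is exactly the paper's minimal-counterexample choice of ``same number of stones, maximal number of heaps,'' since every move preserves stones and increases heaps, and both proofs then finish with the same mod-$3$ mex computation. The one substantive improvement is that you explicitly verify move availability (a heap of size $\geq 3$ or two heaps of size $\geq 2$ when $\Phi\geq 2$), a point the paper only asserts after setting aside the degenerate all-ones-plus-one-deuce positions.
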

\begin{proof}
Note first that while we can eliminate heaps of size $1$ in our analysis of \textsc{ga2} just as we did in 
Proposition~\ref{prop:ga1simplify} for \textsc{ga1}, we are not compelled to do so. In fact, not removing them makes for a simpler analysis here. 

In the case of a single stone it is clear that the Grundy value is $0$ as no moves are possible. It is also easy to see that the claim holds when all heaps have size $1$ except possibly a single heap of size $2$, so we need only consider the remaining cases. We proceed now by minimum counter-example. Assuming that the claim is false, let $m$ be the smallest integer such that not all games on $m$-many stones follow the statement of the theorem. Among all such games with $m$ stones, let $H = (h_1,\ldots ,h_j)$ be a position with the greatest number of heaps. 

For any positive integer $x$ let $x_1,x_2,x^1,x^2,x^3$ be positive integers such that $x_1+x_2=x$ and $x^1+x^2+x^3=x$. For any $i,k$ with $1\leq i< k\leq j$, the options of $H$ are \\
\noindent
$H\setminus \{h_i\} \cup \{h_{i1},h_{i2}\}$
\\$H\setminus \{h_i\} \cup \{h_i^1,h_i^2,h_i^3\}$
\\$H\setminus \{h_i,h_k\} \cup \{h_{i1},h_{i2},h_{k1},h_{k2}\}$
\\
i.e. all positions in which any one heap, $h_i$, of sufficient size is removed and replaced with two or three heaps whose sum is $h_i$, and those in which any two heaps $h_i, h_k\geq 2$ are removed and each replaced with two heaps whose sums are $h_i,h_k$ respectively.

All options contain $m$ total stones since we have not removed any. Further, every option has more heaps than does $H$, and therefore, by the choice of minimal counter-example, adhere to the statement of the claim. Thus their Grundy values are all equal to $(m+t+1) \pmod{3}$ and $(m+t+2) \pmod{3}$. Note also that both of these values must appear at least once among the Grundy values of the options of $H$. Therefore $H$ must have Grundy value $(m+t) \pmod{3}$, contradicting the claim that $H$ fails the claim of the theorem.
\end{proof}

\section{The Crossover-Mutation Game}

In order to represent both crossover and mutation more accurately, we now consider a game played on a pair of bit strings. 

\begin{Ruleset}[\textsc{crossover-mutation (cm)}]\label{rs:ga3}
A position in \textsc{crossover-mutation} is a pair of bit strings of length $n$, $B_1 = (a_1,\ldots ,a_n)$ and $B_2  =(b_1,\ldots ,b_n)$. There are two move options. \emph{Crossover} consists of choosing an integer $k$, $1\leq k\leq (n-1)$, wherein all bits $1$ through $k$ from $B_1$ are swapped with the bits $1$ through $k$ of $B_2$. In particular, leading to the new bit strings $$B_1' = (b_1,\ldots ,b_{k},a_{k+1},\ldots ,a_n), B_2' = (a_1,\ldots ,a_{k},b_{k+1},\ldots ,b_n).$$ \emph{Mutation} involves choosing a single gene $c_i$ from either of the bit strings and flipping it to $1-c_i$. In both cases, the move is legal if the total number of sub-strings of the form $01$ and $10$ increases.
\end{Ruleset}

All positions of \textsc{crossover-mutation} are equivalent to certain positions from another game called \textsc{arc kayles}. We first present the ruleset, then prove the equivalence.

\begin{Ruleset}[\textsc{arc kayles}] {\rm \cite{Schaefer1978}} Let $G$ be a graph. On a player's turn, they remove an edge of $G$ along with all edges incident to it. 
\end{Ruleset}

\begin{Theorem}
Let $G$ be a \textsc{cm} position. $G$ is equivalent to an \textsc{arc kayles} position.  
\end{Theorem}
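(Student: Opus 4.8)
The plan is to establish a bijection between \textsc{cm} positions and certain \textsc{arc kayles} graphs under which legal moves correspond exactly, so that the two games have identical game trees (hence equal Grundy values). First I would identify the right structural invariant of a \textsc{cm} position. As in the analysis of \textsc{ga1} and \textsc{ga2}, what matters is not the actual bit values but the \emph{disorder} pattern: the legal-move condition only references the count of $01$ and $10$ sub-strings, and a move is legal precisely when it increases this entropy. I would first argue, in the spirit of Proposition~\ref{prop:ga1simplify}, that for each index $i\in\{1,\ldots,n-1\}$ the only datum relevant to play is whether the pair of columns $(a_i,a_{i+1})$ and $(b_i,b_{i+1})$ currently contributes to the entropy and whether a given move can increase it. This should let me replace the pair of strings by a combinatorial object recording, at each boundary between adjacent positions, the local agreement/disagreement data.

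Next I would translate the two move types into graph operations. A crossover at index $k$ acts only at the single boundary between positions $k$ and $k+1$: it swaps the two prefixes, so the only entropy change occurs across that one boundary (all internal $01/10$ counts within each prefix and within each suffix are preserved, since swapping identical-length initial segments between the two strings does not disturb adjacencies except at the cut). A mutation at gene $c_i$ flips a single bit, which can affect entropy only at the boundaries immediately to the left and right of position $i$. The key observation I expect to need is that each legal move consumes a boundary (or a small bounded set of boundaries) and can never again be played there, because entropy must strictly increase and is bounded above; this monotonicity is what makes the position decompose into independent local components that behave like the edges of a graph. I would then set up the vertex set of the \textsc{arc kayles} graph to be (essentially) the boundaries, with edges encoding which boundaries a single move can simultaneously affect — a crossover touching one boundary and a mutation touching an adjacent pair of boundaries giving the incidence structure, so that "removing an edge and all incident edges" mirrors "making a move and thereby forbidding the overlapping future moves."

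Having built the candidate graph $G(B_1,B_2)$, I would prove the equivalence by exhibiting a Grundy-value-preserving correspondence: show that every legal \textsc{cm} move corresponds to an edge-removal move in \textsc{arc kayles} on $G(B_1,B_2)$, and conversely, and that the resulting positions correspond again under the same construction. Since both games are impartial and terminate, matching move trees (equivalently, a bijection of options commuting with the construction) gives equality of Grundy values by induction, which is the standard way to conclude equivalence of impartial positions. The main obstacle, I expect, will be the precise bookkeeping in the mutation case: a single mutation can alter entropy at two neighboring boundaries at once, and I must verify that the "incident edges removed" rule of \textsc{arc kayles} captures exactly the set of moves rendered illegal (or trivialized) by that mutation — neither more nor fewer. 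Getting the incidence relation to align perfectly for mutations, while simultaneously accommodating crossovers (which are more local), is the delicate step; once the correspondence of options is pinned down, the inductive Grundy argument is routine.
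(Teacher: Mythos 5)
Your proposal follows essentially the same route as the paper: both build the \textsc{arc kayles} graph by taking the boundaries between adjacent bit positions as vertices and the legal moves as edges (each mutation joining its two flanking boundaries within a string, each crossover joining the corresponding boundaries of the two strings), so that incidence encodes mutual exclusion of moves. The only cosmetic difference is in the final verification --- you propose an option-bijection argument with induction on the game tree, while the paper plays the sum $G_{CM}+G_{AK}$ and gives a second-player pairing strategy; these are interchangeable formulations of the same equivalence, and the delicate bookkeeping you flag (that a mutation invalidates exactly the moves corresponding to incident edges, no more and no fewer) is precisely the point the paper's proof checks.
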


\begin{proof}
Let $G_{CM}$ be a \textsc{cm} position of length $n$ as $B_1 = (a_1,\ldots, a_n)$ and $B_2  =(b_1,\ldots, b_n)$. We will first construct the \textsc{arc kayles} position, $G_{AK}$. Then we will prove its equivalence by showing that there is a bijection between the options of the games. 

First consider $B_1$ of $G_{CM}$. For each mutation, its representation in $G_{AK}$ is an edge. Edges are incident in $G_{AK}$ if the corresponding bits in $G_{CM}$ were adjacent in $B_{1}$. Similarly for $B_2$. We label the edges of $G_{AK}$ by the corresponding bit label in $B_1$ or $B_2$ respectively. For the crossover moves in $G_{CM}$, if there exists a crossover move at $a_{i}$, $a_{i+1}$ and $b_{i}$, $b_{i+1}$ then in $G_{AK}$ there is a vertex connecting the edges $a_i$ and $a_{i+1}$, call it $v_{a_i,a_{i+1}}$, similarly for $b_{i}$ and $b_{i+1}$, call it $v_{b_i,b_{i+1}}$. Label this edge as $v_{a_i,a_{i+1}}v_{b_i,b_{i+1}}$. (see Figure~\ref{fig:example} for an example of the equivalence).

To show that $G_{AK}$ is equivalent to $G_{CM}$ via this construction, we need to show that there exists a bijection between the options. In particular, that $G_{CM} - G_{AK}=0$. Since the rulesets are impartial, we consider $G_{CM} + G_{AK}$. Suppose the first player moves in $G_{CM}$ with a mutation at $a_{i}$. By the existence of this mutation, it means that both $a_{i-1}$ and $a_{i+1}$ were the same as $a_{i}$ (if they exist), otherwise the entropy wouldn't have increased. After the turn, neither can be mutated thereafter because again, it would not increase the entropy. Also, this move disallows future crossover at $a_{i}$ because it will not increase the entropy. Player 2 responds by removing the edge $a_{i} \in G_{AK}$. This has the effect of removing all incident edges, in particular, $a_{i-1}$, $a_{i+1}$ and $v_{a_i,a_{i+1}}v_{b_i,b_{i+1}}$, if they exist. If instead Player 1 chose a crossover move in $G_{CM}$ at position $k$, this eliminates the possibility of future mutations at positions $a_{k}$, $a_{k-1}$, $b_{k}$, and $b_{k-1}$. The corresponding move for Player 2 is to respond in $G_{AK}$ by removing the edge with label $v_{a_{k-1},a_{k}}v_{b_{k-1},b_{k}}$, which effectively removes all edges $a_{k}$, $a_{k-1}$, $b_{k}$, and $b_{k-1}$.

 If instead Player 1 moved in $G_{AK}$, we simply reverse the roles in the above argument and Player 2 will always have a response. Thus Player 2 will win this game under normal play. Hence $G_{CM}$ and $G_{AK}$ are equivalent. 
\end{proof}

It turns out that \textsc{cm} is also closely related to another well-studied game.

\begin{Ruleset}[\textsc{cram}]\label{rs:cram}{\rm\cite{Berlekamp}}
In the impartial game \textsc{cram} players take turns filling a pair of empty orthogonally adjacent spaces in a grid.
\end{Ruleset}

The reader may recognize \textsc{cram} as the impartial version of \textsc{domineering}. All \textsc{cm} positions are also associated with $2\times n$ \textsc{cram} positions, except for a few with extra pendant vertices which, if realized in \textsc{cram}, require a board of width at least three. We address one such case below. If the \textsc{cm} position is of a certain form, in particular every entry $a_i$ of $B_1$ is the same, and every entry of $B_2$ is $1-a_i$, the proven equivalence to a subset of \textsc{arc kayles} positions allows us to immediately deduce the game values. 

\begin{Theorem}[\cite{Berlekamp}, vol 3]\label{thm:ww-grids}
Let $G$ be a position in \textsc{arc kayles} in the form of a $2\times n$ grid graph. Then $G$ has value $0$ if $n$ even and value $*$ if $n$ odd. Furthermore, this game value does not change under the addition of up to two \emph{tufts} (i.e. induced stars whose center is a vertex of the grid graph).
\end{Theorem}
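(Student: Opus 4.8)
The plan is to prove the statement in two stages: first establish the base result for the bare $2\times n$ grid graph, then handle the robustness claim about adding tufts.

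For the $2\times n$ grid graph, I would proceed by induction on $n$, exploiting the strong symmetry of the grid under the reflection that swaps the two rows and the reflection that reverses the columns. The key observation is that \textsc{arc kayles} on a $2\times n$ grid is highly structured: every edge, when removed together with its incident edges, leaves a disjoint union of smaller grid-like pieces. I would set up the induction by computing small cases $n=1,2$ directly ($n=1$ is a single edge with value $*$, and $n=2$ should compute to $0$), and then for general $n$ show that the Grundy value alternates. The cleanest route is a \emph{strategy-stealing / symmetry} argument: to show a $2\times n$ grid with $n$ even has value $0$, I would exhibit a pairing strategy for the second player. Specifically, the second player responds to any first-player move by playing the mirror-image move under the symmetry that pairs column $i$ with column $n+1-i$ (or under the row-swap), so that the position remains symmetric and the second player always has a legal response; by the standard mirroring argument the second player wins, hence the value is $0$. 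For $n$ odd, the central column breaks the pairing, and I would argue that after the second player's mirroring response to any move, the game reduces to an even grid plus a controlled central piece, forcing the value to be exactly $*$.

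For the tuft-robustness claim, I would analyze how attaching an induced star (a tuft) at a grid vertex affects the pairing strategy. The central insight is that a tuft edge, when removed, takes out the grid vertex it is attached to (and hence all grid edges at that vertex); conversely, removing a grid edge at that vertex destroys the tuft. I would show that the mirroring/pairing strategy can be extended to accommodate up to two tufts: if the two tufts sit at positions symmetric under the chosen involution, they are naturally paired with each other; if a tuft is played, the second player's response destroys it (or its partner) while preserving the grid symmetry, so that the invariant ``position is symmetric and the second player has a response'' is maintained. The parity of $n$ still determines whether the residual value is $0$ or $*$, exactly as in the untufted case.

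The main obstacle I expect is the careful case analysis in the odd-$n$ base argument and, more seriously, verifying that two tufts placed at \emph{arbitrary} (possibly non-symmetric) grid vertices do not disturb the value. The pairing strategy is transparent when the tufts respect the involution, but for two tufts in generic positions one must either choose an involution that pairs them appropriately or handle the interaction between a tuft move and the grid-reflection response directly. Rather than pushing a single global symmetry through, I suspect the robust way is to reduce to the cited result (\cite{Berlekamp}, vol 3) by checking that each tuft can be absorbed into the local analysis at its attachment vertex, showing that removing a tuft is game-theoretically interchangeable with removing a grid edge at that vertex. Confirming that this interchangeability holds for up to two—but not necessarily three—tufts, and that this is exactly where the bound of two becomes essential, is the delicate point of the proof.
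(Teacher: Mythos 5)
First, note that the paper does not actually prove this statement: it is quoted from Winning Ways (vol.~3) and used as a black box, so there is no in-paper proof to match. The closest analogue is the paper's proof of Theorem~\ref{claim:ak}, which treats a tufted grid by an involution argument in the even case and an exhaustive option analysis in the odd case. Measured against that template, your sketch has a concrete error in the even case: both symmetries you name fail. Under the column reversal $j\mapsto n+1-j$ with $n$ even, the two horizontal edges joining columns $n/2$ and $n/2+1$ are fixed by the involution, so if the first player opens on one of them the mirror response does not exist; under the row swap every vertical edge is fixed, with the same defect. The involution that works is the $180^{\circ}$ rotation (the composition of the two), which for $n$ even fixes no edge and never sends an edge to an adjacent edge, so the reply is always legal and the position stays symmetric; this is exactly the involution invoked in the paper's proof of Theorem~\ref{claim:ak}.

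The more serious gap is in the odd case and the tufts. A pairing strategy only determines the outcome class; to prove the value is exactly $*$ you must show both that $0$ occurs among the options' values (easy: play the central vertical edge, leaving two congruent grids whose values cancel) \emph{and} that no option has value $*$, so that the mex equals $1$. The latter is an edge-by-edge analysis over all move types --- precisely the bulk of the paper's proof of Theorem~\ref{claim:ak} --- and ``reduces to an even grid plus a controlled central piece'' does not supply it. (The tempting shortcut of pairing on $G+*$, matching the central vertical edge with the extra $*$, also fails: a horizontal move adjacent to the centre removes the central vertical edge as collateral damage and orphans the extra component.) Finally, for two tufts at arbitrary, non-symmetric vertices your pairing breaks down, and the proposed fallback of ``reducing to the cited result'' is circular when the cited result is the statement being proved. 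A self-contained argument needs the inductive, option-exhausting structure of Theorem~\ref{claim:ak} rather than a pure symmetry argument.
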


\begin{Theorem}\label{claim:ak}
Let $G(k)$ be a position in \textsc{arc kayles} in the form of a $2\times k$ grid graph with pendant edges adjacent to $3$ or $4$ of the four corners (see Figure \ref{fig:grid}). Then $G(2k+1)$ has game value $*2$ if $k\in \{0,1\}$ and $*$ if $k\geq 2$, and $G(2k)$ has value $0$ for all $k\geq 1$ when $h_0$ is present. 
\end{Theorem}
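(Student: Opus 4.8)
The plan is to split Theorem~\ref{claim:ak} into its even-width and odd-width families and to treat them with different techniques: a direct mirroring (Tweedledum--Tweedledee) strategy wherever the tuft configuration is symmetric enough, and induction on $k$ together with the mex rule and Theorem~\ref{thm:ww-grids} everywhere else. The unifying computational fact I would lean on is that a single \textsc{arc kayles} move on a $2\times n$ grid disconnects the board into smaller tufted grids: removing the rung in column $j$ leaves a $2\times(j-1)$ tufted grid and a $2\times(n-j)$ tufted grid, while removing a horizontal rail or a tuft edge leaves a similar, smaller disjoint union, so every option can be evaluated inductively.

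First I would dispose of the even-width case $G(2k)$ carrying a full complement of four tufts. The $2\times 2k$ grid admits the $180^\circ$ rotation $\sigma\colon (i,j)\mapsto(3-i,\,2k+1-j)$, which is a fixed-point-free involutive automorphism; because the width is even, $\sigma$ has no fixed vertex and joins no vertex to its own image, so no single move can destroy both an edge and its image. Since the four corners split into the two $\sigma$-orbits $\{(1,1),(2,2k)\}$ and $\{(2,1),(1,2k)\}$, a tuft placed at every corner (as in Figure~\ref{fig:grid}) is carried to a tuft, so $\sigma$ extends to the tufted graph with the same non-incidence property. The second player then answers the removal of any edge $e$ and its incident edges by removing $\sigma(e)$ and its incident edges, which is always still legal, so $G(2k)$ has value $0$. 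The same argument covers every $\sigma$-invariant tuft set, i.e. zero, two, or four tufts, but crucially not three.

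For the three-tuft even positions, in which $h_0$ is present, the rotation is unavailable, and I would argue by induction on $k$ with an explicit analysis of the first player's options. Using the disconnection described above, each option is a disjoint union of strictly smaller tufted grids whose values are supplied either by Theorem~\ref{thm:ww-grids} (for the sub-boards with at most two tufts) or by a smaller instance of the present claim. The target is to show that no option has Grundy value $0$, equivalently that the second player always has a reply returning the game to a value-$0$ position, which certifies value $0$. The tuft $h_0$ is precisely what furnishes the needed reply in the asymmetric configurations, and checking that it always does so is the crux of this part.

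The odd-width family $G(2k+1)$ is the main obstacle, since $\sigma$ now fixes the central rung and the mirroring strategy collapses, consistent with the value being $*$ rather than $0$. Here I would again induct on $k$ and compute through the mex rule. Removing the central rung splits the board into two even-width tufted grids, each of value $0$ by the case above, so that option has value $0$; this shows $G(2k+1)\neq 0$ and provides the $0$ needed for mex. It then remains to prove that for $k\ge 2$ no option has Grundy value $1$, forcing the value to be exactly $*$, whereas for the two small boards $k\in\{0,1\}$ a direct calculation exhibits options of values $0$ and $1$ but none of value $2$, yielding $*2$. The delicate point throughout is the bookkeeping that separates $*$ from $*2$: one must confirm that the value-$1$ (respectively value-$2$) option present in the small boards genuinely disappears once $k\ge 2$, and I expect this threshold calculation, rather than any single structural idea, to be where the real care is required.
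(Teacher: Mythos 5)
Your proposal follows essentially the same route as the paper: the $180^{\circ}$ involution pairing disposes of the even-width case, and $G(2k+1)$ is handled by induction on $k$ together with the mex rule and Theorem~\ref{thm:ww-grids}, exhibiting an option of value $0$ while showing no option has value $*$, with $k\le 1$ checked exhaustively. One small correction: $h_0$ is the \emph{optional fourth} tuft, so the even-width claim ``when $h_0$ is present'' is exactly the four-tuft, $\sigma$-symmetric configuration your pairing argument already covers, and the separate induction you sketch for a three-tuft even board is not needed for the stated theorem.
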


\begin{figure}
\centering
\includegraphics[width=10cm]{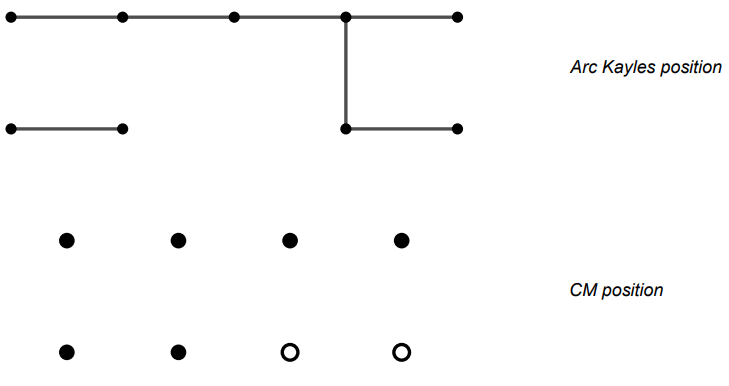}
\caption{Example of an \textsc{arc kayles} position which is equivalent to a position in \textsc{crossover-mutation}.}
\label{fig:example}
\end{figure}

\begin{figure}
\centering
\includegraphics[width=\textwidth]{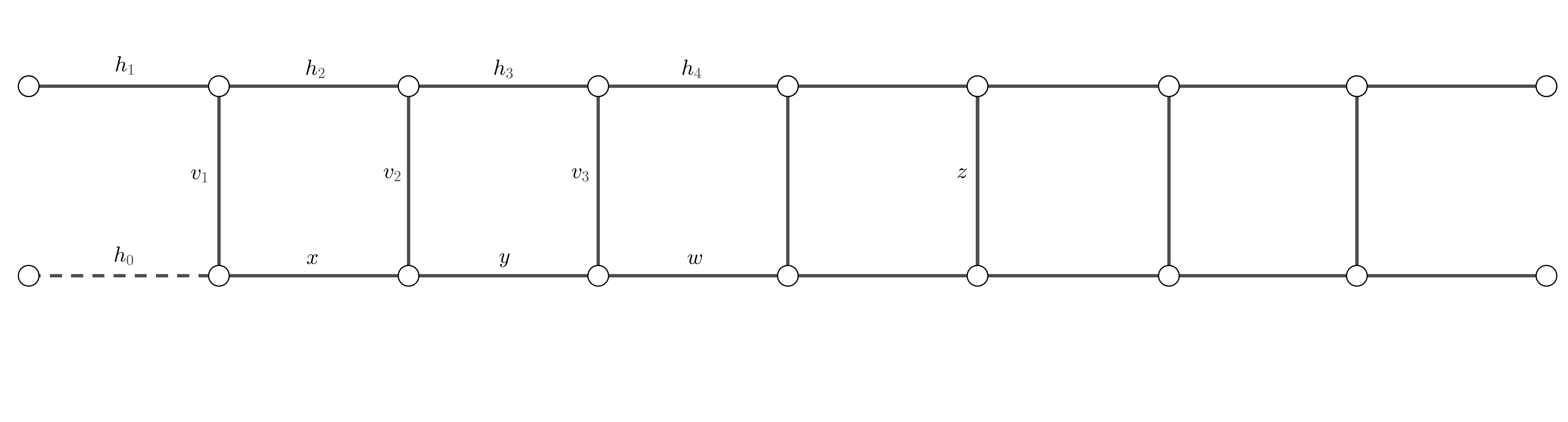} 
\caption{An \textsc{arc kayles} position; equivalent to a position in \textsc{crossover-mutation} when $h_0$ is present.}
\label{fig:grid}
\end{figure}

\begin{figure}
\centering
\includegraphics[width=\textwidth]{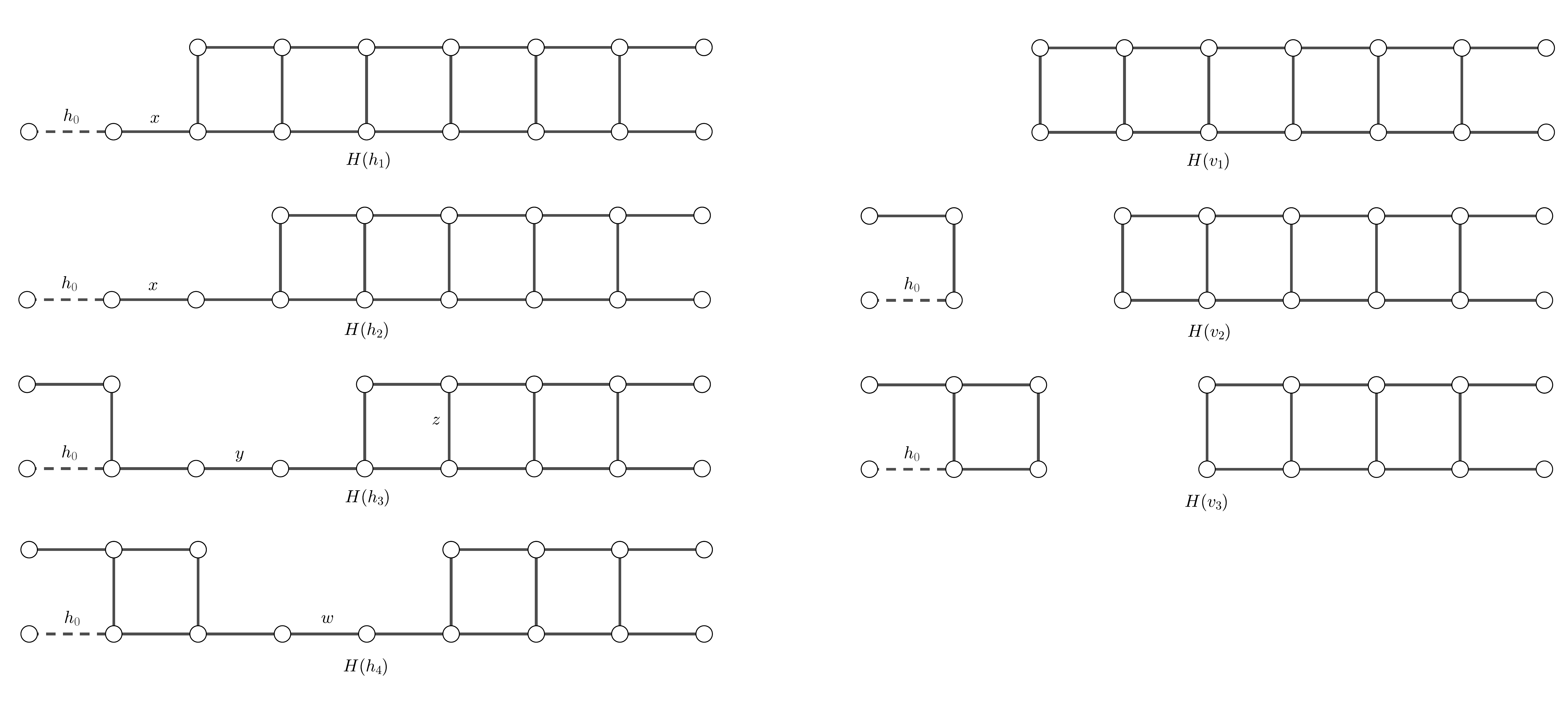}
\caption{The options of $G(2k+1)$ from Figure \ref{fig:grid}.}
\label{fig:grid-options}
\end{figure}

\begin{proof}
Note that if $k\leq 1$ then the possible values of $G(2k+1)$ are easily demonstrated by exhaustion. The value of $G(2k)$ is just as easily found to be in $\mathcal{P}$ by considering an involution strategy, whereby the second player responds to a play on edge $e$ with a play on the edge equivalent to $e$ under $180^{\circ}$ rotational symmetry. We now proceed by induction on $k$ to find the remaining values of $G(2k+1)$ whether or not edge $h_0$ is present.

Let $e$ be an edge in $G(2k+1)$, and consider $H(e)$ to be the option yielded by play on $e$ (see Figure \ref{fig:grid-options}). We demonstrate that no option of $G(2k+1)$ has value $*$. 

\begin{itemize}
	\item[$H(h_1)$] Play on edge $x$ results in a graph of the form $2\times (2k-1)$ with three pendant edges. If $k$ is sufficiently large this graph has value $*$ by inductive assumption, and hence $H(h_1)$ does not have value $*$. Otherwise, the value can be checked exhaustively for the base case of $G(5)$, when $k=2$, to have value $*$ with or without the presence of $h_0$. Hence, $H(h_1)$ does not have value $*$. 
	
	\item[$H(h_2)$] Play on the edge $x$ results in a position with value $*$ by Theorem \ref{thm:ww-grids}. Therefore $H(h_2)$ does not have value $*$.
	
	\item[$H(h_3)$] If $h_0$ is not present then play on edge $y$ yields a path with value $*$ disconnected from a $2\times (2k-2)$ grid graph with two pendant edges which, by Theorem \ref{thm:ww-grids}, has value $0$. If $h_0$ is present then play on edge $z$ yields the sum of a small graph with value $*$ and a $2\times(2k-4)$ grid graph with two pendant edges. In both cases, the resulting sums are $*$. Therefore, $H(h_3)$ does not have value $*$.
	
	\item[$H(h_4)$] Here $h_4$ can be any horizontal edge to the right of $h_3$. Play on edge $w$ results in a game with a sum of two positions with opposite parity. Hence has value $* + 0 = *$ by Theorem \ref{thm:ww-grids}, so $H(h_4)$ does not have value $*$.
	
	\item[$H(v_1)$] This graph has value $0$ by Theorem \ref{thm:ww-grids}.
	
	\item[$H(v_2)$] If $h_0$ is present then we have the sum of a path with value $*2$ and a game with value $*$ by Theorem \ref{thm:ww-grids}. If $h_0$ is not present then the path has value $*$. So $H(v_2)$ has value $*3$ or $0$.
	
	\item[$H(v_3)$] We invoke Theorem \ref{thm:ww-grids} yet again, as the resulting graph is a pair of grid graphs with one or two pendant edges each, both with value $*$ or both with value $0$. Therefore $H(v_3)$ has value $0$.

\end{itemize}

Since no option of $G(2k+1)$ has value $*$ and $G(2k+1)\in \mathcal{N}$, we see that it has value $*$ for $k\geq 2$.
\end{proof}

Theorem \ref{claim:ak} leads directly to the following corollary about a family of \textsc{crossover-mutation} positions.

\begin{Corollary}\label{cor:cm-extreme-position}
The \textsc{cm} game composed of a length-$n$ string of all $1$s and a length-$n$ string of all $0$s has value $0$ if $n$ is odd, $*2$ if  $n\in \{2,4\}$, and $*$ otherwise.
\end{Corollary}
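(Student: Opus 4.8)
The plan is to translate the given \textsc{cm} position into its equivalent \textsc{arc kayles} graph via the \textsc{cm}--\textsc{arc kayles} correspondence established above, to recognize that graph as one of the positions $G(m)$ analyzed in Theorem~\ref{claim:ak}, and then to read off the value from the parity of $n$.

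First I would apply the correspondence to the position $B_1=(1,1,\ldots,1)$, $B_2=(0,0,\ldots,0)$ of common length $n$. Every mutation of $B_1$ flips a $1$ to a $0$ and hence strictly increases the entropy, so all $n$ mutations are legal from the start; since $a_i$ and $a_{i+1}$ are adjacent, their edges form a path $a_1,\ldots,a_n$, and likewise $b_1,\ldots,b_n$. For the crossovers, a crossover at any $k$ with $1\le k\le n-1$ turns $1^n,0^n$ into $0^k1^{n-k},1^k0^{n-k}$, each of entropy $1$, so every such crossover is legal. Each of these $n-1$ crossovers contributes one rung joining the two paths at an interior vertex, and by the symmetry of the starting strings the rungs fill all $n-1$ interior columns while the two extreme columns receive none. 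Consequently the four extreme edges $a_1,a_n,b_1,b_n$ hang off the corner vertices as pendant edges, and the interior columns form a $2\times(n-1)$ grid graph. This is exactly $G(n-1)$ with a pendant at each of the four corners, i.e.\ with $h_0$ present (compare Figure~\ref{fig:grid}).

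With this identification the corollary follows from Theorem~\ref{claim:ak} by tracking the parity of $n-1$. If $n$ is odd then $n-1$ is even, placing us in the case $G(2k)$ with value $0$. If $n$ is even then $n-1$ is odd, placing us in the case $G(2k+1)$: for $n\in\{2,4\}$ we have $n-1\in\{1,3\}$, that is $k\in\{0,1\}$, giving value $*2$, whereas for even $n\ge 6$ we have $n-1\ge 5$, that is $k\ge 2$, giving value $*$. Collecting the cases yields precisely $0$ for odd $n$, $*2$ for $n\in\{2,4\}$, and $*$ otherwise. The degenerate value $n=1$ admits no legal move and so has value $0$, consistent with the odd case.

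The only genuine work is the first step: confirming that the equivalent \textsc{arc kayles} graph really is $G(n-1)$ with all four corner pendants, rather than a position with only three pendants or with a missing interior rung. This reduces to checking, for the specific symmetric strings $1^n$ versus $0^n$, exactly which mutations and crossovers increase entropy and therefore survive as edges; the highly symmetric choice of starting strings is what forces the full complement of $n-1$ rungs and all four pendants, placing us squarely within the hypotheses of Theorem~\ref{claim:ak}. I would double-check the placement of the rungs against the small cases $n=2$ (value $*2$) and $n=3$ (value $0$), since an off-by-one in the rung positions would already fail there.
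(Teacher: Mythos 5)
Your proposal is correct and follows exactly the paper's route: the paper likewise disposes of the corollary by identifying the position with the \textsc{arc kayles} graph $G(n-1)$ with $h_0$ present and invoking Theorem~\ref{claim:ak}, with the parity bookkeeping ($n$ odd $\Leftrightarrow$ $n-1$ even, and $n\in\{2,4\}$ $\Leftrightarrow$ $k\in\{0,1\}$) exactly as you describe. Your explicit verification that the symmetric strings $1^n,0^n$ yield all $2n$ mutation edges and all $n-1$ crossover rungs, hence all four corner pendants, is more detail than the paper gives but is the right justification for the one sentence the paper states.
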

\begin{proof}
This position is equivalent to the \textsc{arc kayles} position $G(n-1)$ with $h_0$ present, as indicated in Theorem \ref{claim:ak}. 
\end{proof}

Most remaining \textsc{cm} positions are equivalent to $2\times n$ positions in \textsc{cram} which, while remaining unsolved, have been addressed in the literature~\cite{Berlekamp}. It's worth noting that all \textsc{cm} positions in which no crossover move is possible are simply represented by a disjunctive sum of paths in \textsc{arc kayles}, whose values are known~\cite{HugganS}.

\section{Conclusion and further research}\label{sec:conc}
We have seen the possible application of genetic programming to the determination of Grundy values of impartial combinatorial games. In addition, we have seen it both provide an exact function and simply inform our own mathematical analysis. Note that the game for which it proved most useful, \textsc{ga2}, could likely have been solved without the use of genetic programming and instead through a simple examination of the computed Grundy values. But we have also seen that it \emph{was} solved through the use of genetic programming, and therefore this method could prove useful in the future. At the very least, it could be utilized to reduce the time and effort taken to conjecture formulas for Grundy values. 

We are curious whether or not genetic programming can be used for problems within CGT that a mathematician simply examining a list of values is unlikely to solve. To answer this we suggest more efforts into this practice. It will be very useful, for example, to compile a database of impartial combinatorial games with known and as yet unknown solutions. This could help inform the choice of default functions to include in future genetic programming attempts. 

There are modifications that we suggest be made to future GP for CGT projects. Firstly, it would be beneficial to develop a more robust fitness function. As there is no obvious metric over the set of nimbers outside of the nim-sum, an analytical approach to metrics over impartial games would be helpful. Secondly, the method for fitness employed in \cite{oltean2004evolving} does not use pre-computed data points at all. Instead the author determines the fitness of a program by comparing the computed outcome classes of a set of positions with those of its options, and relating the fitness to the number of deviations from the basic tenets of impartial games that are found among these computations. Something similar could be used for Grundy value programming, involving the \emph{mex} (minimum excludant) function. However, the distance between actual value and computed value remains a possible stumbling block.

\end{document}